\title{Coinductive Invertibility in Higher Categories}
\author{Alex Rice}
\date{2020-08-24}
\date{\displaydate{date}}
\newtheorem{theorem}{Theorem} 
\newtheorem{cor}[theorem]{Corollary}
\newtheorem{lemma}[theorem]{Lemma} \theoremstyle{definition}
\newtheorem{definition}[theorem]{Definition} \theoremstyle{remark}
\newtheorem{remark}[theorem]{Remark} 
\DeclareMathOperator{\id}{id}
\newcommand{\linv}[1]{{}^\star\!#1} \newcommand{\rinv}[1]{#1^\star}
\newcommand{\inv}[1]{#1^{-1}} \newcommand{\comp}{\star}
\tikzset{
  every node/.style={
    rounded corners=5pt,
    fill=white
  }
}
\begin{document}
\maketitle

\listoftodos{}

\begin{abstract}
  Invertibility is an important concept in category theory. In higher
  category theory, it becomes less obvious what the correct notion of
  invertibility is, as extra coherence conditions can become necessary
  for invertible structures to have desirable properties.

  We define some properties we expect to hold in any reasonable definition of a
  weak \(\omega\)-category. With these properties we define
  three notions of invertibility inspired by homotopy type theory. These are
  \emph{quasi-invertibility}, where a two sided inverse is required,
  \emph{bi-invertibility}, where a separate left and right inverse is
  given, and \emph{half-adjoint inverse}, which is a quasi-inverse
  with an extra coherence condition. These definitions take the form
  of coinductive data structures. Using coinductive proofs we are able
  to show that these three notions are all equivalent in that given any
  one of these invertibility structures, the others can be obtained.
  The methods used to do this are generic and it is expected that the
  results should be applicable to any reasonable model of higher
  category theory.

  Many of the results of the paper have been formalised in Agda using
  coinductive records and the machinery of sized types.
\end{abstract}
\section{Introduction}\label{sec:intro}

In the study of higher category theory, the notion of invertibility is
central. It is common within higher category theory to not specify
that two objects are equal, or that two sides of an equation are
equal, but rather specify that there is a higher level equivalence
between these two objects. It is usual to say that two objects are
equivalent exactly when there is an invertible morphism between them.

This idea can be seen even in the simplest examples of higher
categories. Consider the definition of a monoidal category. It is our
intention that given objects \(A\), \(B\) and \(C\) that \(A \otimes
(B \otimes C)\) and \((A \otimes B) \otimes C\) represent the same
object. However it is often the case that requiring equality between
these objects (as in a strict monoidal category) is too restrictive
and so the more general definition states that there is a natural isomorphism
between them.

This is our first example of invertibility. An isomorphism between
\(A\) and \(B\) is a morphism \(f : A \to B\) such that there exists a
morphisms \(g : B \to A\) with \(f \circ g = \id_B\) and \(g \circ f =
\id_A\). This notion of isomorphism works well throughout standard
category theory and even in monoidal categories.

However if higher categories are considered, it can be seen that the
problem has just been pushed into the higher structure of the
category. We have specified that \(g \circ f\) and \(\id_A\) should be
equal, yet have already stated that equality is not the best notion of
two objects in a higher category being the same. Ideally we want a collection of data
that specifies that \(f\) has an inverse \(g\), and that there is a
higher morphism \(\alpha : g \circ f \to \id_A\) and an inverse
\(\beta : \id_A \to g \circ f\), and that there exists a morphism
\(\alpha \circ \beta \to \id_{\id_A}\) and continuing in this fashion.
However, this generates an unwieldy set of data that is difficult to
describe.

A natural way we can talk about infinite data structures like this is through
coinduction. The definition of equivalence above can be written nicely
by saying that \(f : A \to B\) is an equivalence if there exists a \(g
: B \to A\) and equivalences \(f \circ g \to \id_B\) and \(g \circ f
\to \id_A\). This is a neat method of defining an equivalence, but
also has the advantage of opening up more proof techniques to us. In
particular, it allows us to structure proofs coinductively.

Given this coinductive machinery, the definition of equivalence above
seems very natural. If we start with our definition of isomorphism,
then realise that the equalities introduced should themselves be
weakened to equivalences, we end up with the proposed definition.
However this is not the only way that invertibility could have been
defined. In type theory and in particular in HoTT~\cite{hottbook},
there are three notions of invertibility defined:

\begin{enumerate}
\item a \emph{quasi-inverse}, which is similar to the notion of
  isomorphism presented already;
\item a \emph{bi-inverse}, where instead of specifying an inverse to
  the function \(f\), we instead present separate left and right
  inverses;
\item a \emph{half-adjoint inverse}, defined to be the same as the
  quasi-inverse but with an extra coherence condition.
\end{enumerate}

There are certain properties of these invertibility conditions that
are important in type theory. In the language of HoTT, all three of
these definitions are equivalent~\cite[Corollary 4.3.3]{hottbook}, in
that a function \(f\) has one of these three types of inverses, then
it also has the other two. This can be viewed as the \((\infty,1)\)
case of what is proved in this paper, as all equalities are naturally
invertible in HoTT. Secondly, definitions 2 and 3 are
propositions~\cite[Theorem 4.2.13, Theorem 4.3.2]{hottbook}. This
means that there is at most one way in which they hold up to
propositional equality.

Although it seems that these three are not applicable to higher
categories due to the use of equality in their definitions, they can
nonetheless be modified into coinductive definitions suitable for such
a category.

\subsection{Contributions and further work}\label{sec:conclusion}

This paper explores the equivalence of various types of invertible
cells, using coinductive proof techniques. The aim is that these
techniques make managing the data of a cell being invertible easier
and the proofs simpler.

Given a setting of higher category theory which we believe is
sufficiently general, we have managed to prove many properties of
bi-invertible structures, which we believe have not been studied in
great depth before. We have shown that, using coinductive techniques,
one can work with these structures using only 2-dimensional reasoning.
Further we have shown that these structures work nicely with inverses
and composition as expected.

These results have been used to show that the three definitions of
invertibility given in the paper all imply each other, in that given
one of these structures we can construct the others. This mirrors
similar proofs in lower dimensional category theory that were already
known well.

Going further, we would like to be able to prove some of the
contractibility results that have been conjectured. It is possible
that this either requires a lot more structure than we have postulated
that any weak higher category should have. Nevertheless, we believe
that contractibility results for bi-invertible structures could have
large implications on ways in which higher categories can be defined
and worked with. It would also have implications on the study of
\(\infty\)-groupoids, where every morphism is invertible.

Lastly, this paper also contributes a large amount of Agda
formalisation for all the structures mentioned. With a little work,
much of the code could be reused for proving similar facts about
\(\omega\)-categories. The power of this formalisation technique would
be largely increased if the condition of ``respecting the graphical
calculus'' could be removed, which is hard to express in Agda. We believe that
just as the correctness of string diagrams for bi-categories follows
from the existence of associators, unitors and some coherence
conditions, there should be a similar set of conditions on
\(\omega\)-category which at least allows up to 2-dimensional
graphical reasoning. The ability to automate a translation from the
graphical calculus to a proof system such as Agda would allow very
complex proofs, including all the proofs in this paper, to be
formalised with ease.

\subsection{Related Work}\label{sec:related}

A lot of work has been done relating to invertible structures in
higher categories. The idea of defining a notion of pre-category is
largely inspired by \citeauthor{Cheng2007}~\cite{Cheng2007}.
\citeauthor{kansangian2009weak}~\cite{kansangian2009weak} discuss the
equivalence of some forms of weak inverse in strict \(n\)-categories.
The result that an equivalence implies an adjoint equivalence in
bicategories is well known and this has been extended to tricategories
by
\citeauthor{gurski2011biequivalences}~\cite{gurski2011biequivalences}.
\citeauthor*{Lafont_2010}~\cite{Lafont_2010} construct the idea
of a quasi-invertible structure in precisely the same way as in this
paper, and proves some of the same results, though these are done in
the setting of a strict \(\omega\)-category, where some of the
coinductive arguments become easier.

While the idea of viewing higher categories as coinductive data
structures is not
new~\cite{cheng2012weak,hirschowitz_et_al:LIPIcs:2015:5166}, we
believe the set up used in this paper is novel. The work of
\citeauthor{hirschowitz_et_al:LIPIcs:2015:5166} is similar enough that
it was possible to adapt it to form the basis for the
formalisation~\cite{rice_agda} in \Cref{sec:formalisation}.

\subsection{Formalisation}\label{sec:formalisation}

It may at first appear that a lot of the coinductive proofs in this
paper work by ``magic'' in that they seem like non-well founded
induction. To remedy this all the definitions and results from
~\cref{sec:categories,sec:bi-invertibility} have
been formalised in the proof assistant
Agda\footnote{\url{https://github.com/agda/agda}} with use of the
standard library\footnote{\url{https://github.com/agda/agda-stdlib}}.
The formalisation uses sized types~\cite{Abel_2010} to ensure
productivity of the functions defined and makes heavy use of
coinductive record types and
copatterns~\cite{Abel_2013}.

The notion of higher pre-category introduced in this paper is very
set-theoretic and becomes messy in the world of type theory. Because
of this, alternative coinductive definitions of globular sets and
composition are taken
from~\citetitle{hirschowitz_et_al:LIPIcs:2015:5166}~\cite{hirschowitz_et_al:LIPIcs:2015:5166}.
This allows a lot of the proofs to become neater as coinduction tends
to work nicely with coinductively defined structures. We believe the
use of a different definition is not consequential as the main purpose
of the formalisation was to show that coinductive elements of the
arguments hold.

The code is publicly available~\cite{rice_agda} and has been tested
with Agda version 2.6.1 and standard library version
1.3.

\subsection{Background}\label{sec:background}

This background section will give a quick introduction to coinduction and some of the ideas behind \(\omega\)-categories required for the paper as well as introducing the first notion on invertibility.

\subsubsection{Coinduction}\label{sec:coinduction}

Throughout the paper, we want to use coinductive data structures and
many functions will be generated by corecursion, and many proofs made
using coinduction. Like inductive data structures, coinductive data
structures are able to reference themselves in their definition. Many
of the coinductive results in this paper are proved in a style similar
to that given in \citetitle{kozen_silva_2017}~\cite{kozen_silva_2017},
and are justified by \emph{guardedness} which is a well-known
concept~\cite{Coquand_1994,Gim_nez_1995}.
Below, a summary of coinduction is given, all of which is standard.

Categorically, coinduction forms the categorical dual of induction.
Whereas inductive data structures can be represented as initial
algebras, coinductive data structures may be represented as terminal
coalgebras~\cite{jacobs1997tutorial}. To see the differences clearly
between these two constructions and how they relate, it will be
helpful to go through an example. Since a lot of the language used
throughout this paper is type-theoretic, induction and coinduction
will be explained in the same way.

When a type is introduced in type theory, there are four parts of its
definition: type formation, term formation, a recursion principle, and
an induction principle. These will be demonstrated with the example of
lists.
\begin{itemize}
\item Type formation tells us how to build the type. For lists, given
  a type \(A\), the type \(\mathsf{List}(A)\) can be formed.
\item Term formation tells us how terms of the type can be
  constructed. \(\mathsf{List}(A)\) has multiple constructors. We have
  \(\mathsf{nil} : \mathsf{List}(A)\), which is a constructor with no
  arguments and \(\mathsf{cons} : A \to \mathsf{List}(A) \to
  \mathsf{List}(A)\), which takes two.
\item Recursion lets us form functions from an inductive type. For
  lists this says that to form a function \(f : \mathsf{List}(A) \to
  B\), it suffices to define \(f(\mathsf{nil}) : B\) and define
  \(f(\mathsf{cons}(a)(xs))\) given the value of \(f(xs)\).
\item Induction is a generalisation of recursion and allows us to
  define functions \(f : \Pi_{xs : \mathsf{List}(A)} P(xs)\) where
  \(P\) is a type family of type \(\mathsf{List}(A) \to \mathcal{U}\),
  with \(\mathcal{U}\) being a universe of types. In other words \(P\)
  assigns a type to each list.
\end{itemize}

A further thing to note is that we will require the use of a
generalisation of this known as \emph{inductive type
  families}~\cite[Section 5.7]{hottbook}. These allow our inductive
types to be parameterised. A good example of this is the vector type,
which is a list of specified length. Here our type formation rule says
that given a type \(A\) we can get a type \(\mathsf{Vec}(A) :
\mathbb{N} \to \mathcal{U}\). Further the term formation rules become
\(\mathsf{nil} : \mathsf{Vec}(A)(0)\) and \(\mathsf{cons} : \Pi_{n :
  \mathbb{N}} A \to \mathsf{Vec}(A)(n) \to
\mathsf{Vec}(A)(n + 1)\). Notice that unlike with lists,
which also took a parameter, vectors can not be defined by considering
each element of the parameter separately. The recursion and induction
rules also need to modified.

Now it will be seen how this differs for coinductive types. For this
we will use one of the most common coinductive types, the stream,
which are infinite lists.

\begin{itemize}
\item Type formation remains the same as before. Given any type \(A\)
  we can form \(\mathsf{Stream}(A)\).
\item Instead of specifying how to form terms, we specify how to
  deconstruct terms. Streams have two deconstructors: \(\mathsf{head}
  : \mathsf{Stream}(A) \to A\) and \(\mathsf{tail} :
  \mathsf{Stream}(A) \to \mathsf{Stream}(A)\).
\item Instead of recursion, we have corecursion, which specifies how
  to build functions \(f : B \to \mathsf{Stream}(A)\). Whereas in recursion, we
  specified the behaviour of the function on each constructor, in
  corecursion we specify how to deconstruct the value returned.
  Therefore it suffices to provide \(\mathsf{head}(f(b)) : A\) and to
  provide \(\mathsf{tail}(f(b)) : \mathsf{Stream}(A)\), where \(\mathsf{tail}(f(b))\) can be defined by either by providing a stream or by providing a \(c : B\) (which may or may not equal \(b\)) such that \(\mathsf{tail}(f(b)) = f(c)\).
\item Coinduction does not work quite the same as induction, as
  flipping the arrow no longer works due to the dependency in the
  type. Instead, if we are trying to prove a predicate \(P :
  \mathsf{Stream}(A) \to \mathcal{U}\), we can define the predicate
  coinductively, and then provide a witness to it by corecursion.
  Coinductive type families will likely be needed to define the
  predicate.
\end{itemize}

As an example of coinduction, suppose we wanted to prove every element
of a stream was even. First we construct the predicate \(P :
\mathsf{Stream}(\mathbb{N}) \to \mathcal{U}\) as having deconstructors
\(\mathsf{headProof}(xs) = \mathsf{is{\text
    -}even}(\mathsf{head}(xs))\) and \(\mathsf{tailProof}(xs) =
P(\mathsf{tail}(xs))\). An element of \(P(xs)\) (a proof that every
element of the stream is even) could then be formed by corecursion.

\subsubsection{\texorpdfstring{\(\omega\)}{Omega}-Categories}\label{sec:categories}

In this paper we want to talk about different structures on
\(\omega\)-categories, however there are many different definitions of
\(\omega\)-categories~\cite{leinster2001survey}. Therefore, similar in style to a
paper by \textcite{Cheng2007}, we give a set of conditions that we
expect to hold in any reasonable definition of a weak infinity
category. Results are then proved using only these conditions, and
we reason that given a precise definition of an \(\omega\)-category, if
one could show that these conditions hold, the results should
follow for free.

\begin{definition}
  The \emph{globe category} \(\mathbb{G}\) is the category where the
  objects are the natural numbers and morphisms are generated by
  \begin{align*}
    &\sigma_n : n \to n+1\\
    &\tau_n : n \to n+1
  \end{align*}
  subject to the conditions
  \begin{align*}
    \sigma_{n + 1} \circ \sigma_n &= \tau_{n + 1} \circ \sigma_n\\
    \sigma_{n + 1} \circ \tau_n &= \tau_{n + 1} \circ \tau_n
  \end{align*}
\end{definition}

\begin{definition}
  A \emph{globular set} \(G\) is a presheaf on \(\mathbb{G}\). We
  refer to \(G(n)\) as the set of \(n\)-cells and for two \(n\)-cells
  \(x\) and \(y\) we write \(f: x \to y\) to mean that \(f\) is an
  \((n+1)\)-cell and
  \begin{align*}
    G(\sigma_n)(f) &= x\\
    G(\tau_n)(f) &= y
  \end{align*}
  where we call \(x\) the source of \(f\) and \(y\) the target of
  \(f\).
\end{definition}

\begin{definition}
  A \emph{globular set with identities and composition} is a globular
  set \(G\) with the following:
  \begin{itemize}
  \item For each \(n\)-cell \(x\), there is an \((n+1)\)-cell, \(\id_x
    : x \to x\).
  \item Inductively define composition as follows:
    \begin{itemize}
    \item Given \(n\)-cells \(f: x \to y\) and \(g: y \to z\) there is
      an \(n\)-cell \(g \comp_0 f: x \to z\).
    \item Given \(\alpha: f \to g\) and \(\beta: h \to j\), where the
      composites \(f \comp_n h\) and \(g \comp_n j\) are well-defined,
      there is a morphism \(\alpha \comp_{n+1} \beta: (f \comp_n h)
      \to (g \comp_n j)\)
    \end{itemize}
  \end{itemize}
\end{definition}

Once we have a globular set with identities and composition, we have
enough to define a notion of equivalence. The most basic notion of
invertibility will be given here, as it will be needed to state the
remainder of properties that we expect an higher category to obey.

\begin{definition}
  Given a globular set \(G\) with identities and composition, with an
  \(n\)-cell \(f : x \to y\), a \emph{quasi-invertible} structure on
  \(f\) is a tuple \((\inv f, f_R, f_L, f_R^{QI}, f_L^{QI})\) where:
  \begin{itemize}
  \item \(\inv f\) is an \(n\)-cell \(y \to x\);
  \item \(f_R\) is an \((n+1)\)-cell \(f \comp_0 \inv f \to \id_y\);
  \item \(f_L\) is an \((n+1)\)-cell \(\inv f \comp_0 f \to \id_x\).
  \item \(f_R^{QI}\) is a quasi-invertible structure on \(f_R\).
  \item \(f_L^{QI}\) is a quasi-invertible structure on \(f_L\).
  \end{itemize}
\end{definition}

\begin{remark}
  The previous definition is a coinductive one. Formally, we have
  defined a coinductive data type, which we could call
  \(\mathsf{QuasiInvertible}(f)\), which references itself (by saying
  that \(f_R\) and \(f_L\) themselves have a quasi-invertible
  structure). Note that as \(\mathsf{QuasiInvertible}\) is dependent
  on the parameter \(f\), it is in fact a coinductive type family as
  described in \Cref{sec:coinduction}.

  Further, it should be noted that, as a coinductive structure,
  \(\mathsf{QuasiInvertible}(f)\) contains an infinite stack of data,
  and in particular contains the data witnessing the invertibility of
  \(f\) at all dimensions.
\end{remark}

\begin{remark}\label{descendants}
  Notice that in a globular set \(G\) with identities and composition,
  given two \(n\)-cells \(x\) and \(y\) with the same source and
  target, a new globular set \(G_{x,y}\) can be defined:
  \begin{itemize}
  \item the \(0\)-cells are the \((n+1)\)-cells in \(G\) with source
    \(x\) and target \(y\);
  \item the \(m\)-cells are the \((n+m+1)\)-cells in \(G\) whose
    source and target lie in the \((m-1)\)-cells of \(G_{x,y}\).
  \end{itemize}
  It is clear that we can carry over identities and composition to
  this new globular set.
\end{remark}

Consider the standard string diagram diagrammatic calculus for
bicategories~\cite{Heunen_2019,selinger2010survey}. We
can draw diagrams containing nodes, lines, and areas where areas
represent \(0\)-cells, a line between areas representing cells \(x\)
and \(y\) represents a \(1\)-cell between \(x\) and \(y\), and node
between lines represent \(2\)-cells between the cells which those lines
represent. These diagrams still represent well formed morphisms in an
globular set with identities and composition (at least in the presence
of unitors and associators, which we introduce below). In a bicategory
there exists the theorem that given a planar isotopy of string
diagrams, the source and target morphisms are equal. In this paper, string diagrams are written bottom to top and right to left, like in \cref{fig:example-string}.

\begin{figure}
  \centering
  \begin{tikzpicture}
    \node (Base1) {\(f\)};
    \node [on grid, draw, above=20pt of Base1] (A) {\(\alpha\)};
    \node [on grid, above left=40pt and 15pt of A] (Top1) {\(g\)};
    \node [on grid, draw, above right=20pt and 15pt of A] (B) {\(\beta\)};
    \draw (Base1) to (A);
    \draw (A) to[out=180,in=-90,looseness=0.9] ++(-15pt,20pt) to (Top1);
    \draw (A) to[out=0,in=-90,looseness=0.9] (B);
    \matrix [right=20pt] at (current bounding box.east) {
      \node [] {\(f : x \to y\)};\\
      \node [] {\(g : x \to y\)};\\
      \node [] {\(h : y \to y\)};\\
      \node [] {\(\alpha : f \to g \comp_0 h\)};\\
      \node [] {\(\beta : h \to \id_y\)};\\
    };
  \end{tikzpicture}

  \caption{The morphism \(\alpha \comp_0 (\id \comp_1 \beta)\)}
  \label{fig:example-string}
\end{figure}

\begin{definition}\label{def:higher-cat}
  Say that a globular set \emph{respects the graphical calculus} if
  for any well typed string diagram and any planar isotopy of this
  diagram, there exists a \(3\)-cell from the source of the isotopy to
  the target. Further, it is required that there is a quasi-invertible
  structure on this morphism.

  We define a \emph{higher pre-category} to be a globular set with
  identities and composition such that any globular set
  generated as in \Cref{descendants} respects the graphical calculus.
  Further we assume the existence of the following morphisms:
  \begin{itemize}
  \item For \(n>0\) and each \(n\)-cell \(f: x \to y\), there are
    \(n+1\)-cells, known as unitors, \(\lambda_f: \id_y \comp_0 f \to
    f\) and \(\rho_f: f \comp_0 \id_x \to f\).
  \item Given \(f,g,h\), \(n>1\)-cells with suitable composition
    defined, we have an associator \(a_{f,g,h} : (f \comp_0 g) \comp_0
    h \to f \comp_0 (g \comp_0 h)\).
  \item For compatible morphisms \(f,g,h,j\), we have an interchanger
    \(i_{f,g,h,j} : (f \comp_n g) \comp_0 (h \comp_n j) \to (f \comp_0 h)
    \comp_n (g \comp_0 j)\).
  \item For suitable \(f,g\) and \(n\), there is a cell \(\id_f
    \comp_{n+1} \id_g \to \id_{f \comp_n g}\).
  \end{itemize}
  All these further morphisms must be equipped with a quasi-invertible
  structure.
\end{definition}

\begin{remark}
  It is worth stressing that it is not intended that a higher
  pre-category is in any way a definition of an \(\omega\)-category.
  Instead, this is just a list of properties that are expected to hold
  in any realistic definition of a higher category. The definition is
  made to be as general as possible so that the results can be as
  applicable as possible. Further, while this definition seems to imply that
  we require our definition of \(\omega\)-category to be enriched over
  \(\omega\)-categories, this is not the case. If \(G\) is a globular
  set then \(G_{x,y}\) will also be a globular set with no extra
  conditions. From here it makes sense to separately require that each
  of these globular sets respects the graphical calculus.
\end{remark}

To end this section we give two small lemmas, which demonstrates the
use of these higher pre-categories and will also be invaluable later.

\begin{lemma}\label{identity}
  Any identity morphism in a higher pre-category \(G\) has a
  quasi-invertible structure.
\end{lemma}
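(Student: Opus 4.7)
The plan is to unpack the definition of quasi-invertibility and exhibit an explicit tuple whose components already come equipped with the data we need, rather than doing any genuine corecursion. Given an identity cell $\id_x$, I would define the inverse to be $\id_x$ itself, so that $\inv{\id_x} := \id_x$. With this choice, both required composites $\id_x \comp_0 \inv{\id_x}$ and $\inv{\id_x} \comp_0 \id_x$ simplify to $\id_x \comp_0 \id_x$, and the target cell $\id_x$ is the same in both cases.

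Next I would take the two unit cells to be unitors: set $(\id_x)_R := \rho_{\id_x}$ and $(\id_x)_L := \lambda_{\id_x}$. These have types $\id_x \comp_0 \id_x \to \id_x$, matching exactly what the quasi-invertible structure requires. Crucially, \Cref{def:higher-cat} guarantees that the unitors themselves carry quasi-invertible structures, and these can be used directly as $(\id_x)_R^{QI}$ and $(\id_x)_L^{QI}$. This closes off the definition without ever needing to unfold the coinductive definition of quasi-invertibility at a deeper level: the work is entirely delegated to the assumed invertibility of the unitors in a higher pre-category.

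The only subtlety is a bookkeeping check about dimensions: the unitors are postulated for $n$-cells with $n>0$, so I should verify that $\id_x$ is always at least a $1$-cell. This holds uniformly: whether $x$ is a $0$-cell or a higher cell, $\id_x$ lives one dimension up, hence is an $n$-cell with $n \geq 1$, and the unitors $\lambda_{\id_x}$ and $\rho_{\id_x}$ are well-defined. Once this is noted, the construction applies at every dimension, so the lemma holds for any identity morphism in $G$. The hardest part, really, is recognising that no true corecursion is needed here: the infinite tower of invertibility data is already packaged inside the quasi-invertible structures on the unitors provided by the ambient higher pre-category.
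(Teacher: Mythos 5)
Your proof is correct and essentially identical to the paper's: the paper also takes $\inv{\id_x} = \id_x$ and delegates the infinite tower of data to the assumed quasi-invertible structure on the unitor (it happens to use $\lambda_{\id_x}$ for both $(\id_x)_R$ and $(\id_x)_L$, whereas you use $\rho_{\id_x}$ for the right witness, which is an immaterial difference). Your dimension check that $\id_x$ is always at least a $1$-cell is a point of care the paper omits.
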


\begin{proof}
  Let \(x\) be any cell. Take \(\inv {\id_x} = \id_x\) and
  \({(\id_x)}_R = {(\id_x)}_L = \lambda_{id_x}\). By assumption,
  \(\lambda_{id_x}\) has a quasi-invertible structure which can be
  used to form a structure on \(\id_x\).
\end{proof}

\begin{lemma}\label{inverse-invert}
  Suppose \(f\) has a quasi-invertible structure. This induces a
  quasi-invertible structure on \(\inv f\).
\end{lemma}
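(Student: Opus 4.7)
The plan is to exhibit a quasi-invertible structure on $\inv f$ directly by recycling the data already present in the given structure on $f$, with no genuine coinduction required.

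Explicitly, I propose to set $(\inv f)^{-1} = f$. Then the required $(n+1)$-cell $(\inv f)_R : \inv f \comp_0 (\inv f)^{-1} \to \id_x$ becomes a cell $\inv f \comp_0 f \to \id_x$, for which I take $(\inv f)_R := f_L$. Symmetrically, $(\inv f)_L : (\inv f)^{-1} \comp_0 \inv f \to \id_y$ becomes $f \comp_0 \inv f \to \id_y$, for which I take $(\inv f)_L := f_R$. Hence the full tuple defining the quasi-invertible structure on $\inv f$ is $(f, f_L, f_R, f_L^{QI}, f_R^{QI})$, where the last two components are simply the quasi-invertible structures on $f_L$ and $f_R$ that are already supplied as part of the QI structure on $f$.

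Because the coinductive definition of $\mathsf{QuasiInvertible}$ only asks us to provide an inverse, left and right cells, and quasi-invertible structures on those cells, providing the data above completes the coinductive definition with no further obligation. There is no real obstacle here: the definition of quasi-invertibility is manifestly symmetric in $f$ and $\inv f$, in the sense that swapping the roles of the left and right witnesses gives a valid structure on the inverse. I would remark that this pattern, swapping the roles of the left and right invertibility data, will reappear in later constructions and is one of the reasons the coinductive formulation behaves well under taking inverses.
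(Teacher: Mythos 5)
Your proposal is correct and is essentially identical to the paper's own proof: both exhibit the tuple \((f, f_L, f_R, f_L^{QI}, f_R^{QI})\) as the quasi-invertible structure on \(\inv f\), swapping the left and right witnesses. The type-checking of the swapped cells is right, and no further argument is needed.
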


\begin{proof}
  Let \((\inv f, f_R, f_L, f_R^{QI}, f_L^{QI})\) be a quasi-invertible
  structure on \(f\). Then \((f , f_L, f_R, f_L^{QI}, f_R^{QI})\) is a
  quasi-invertible structure on \(\inv f\).
\end{proof}

\section{Types of Invertibility}\label{sec:invertibility}

In this section we introduce two more types of invertibility. Some
lemmas are proved that help us to work with them and finally all three
types of invertibility are shown to imply each other. Throughout this
section it will be assumed that we are working in a higher
pre-category \(G\).

\subsection{Bi-Invertibility}\label{sec:bi-invertibility}

Normally throughout mathematics, when saying a map is invertible a
single inverse is specified, and it is shown that this cancels the
function when composed on the left and the right. Instead we can
consider a function that has a left inverse and right inverse,
an inverse only cancels the function on when composed on the
left (or right respectively). It usually does not make sense to
consider functions which have both a separate left and right inverse
as in most scenarios it can be proved that these are equal and any
computations will be simplified by treating them as the same.

However, the concept of bi-invertibility, having both a left and right
inverse, plays a role in type theory. It can be shown (perhaps not
surprisingly) that a morphism being bi-invertible implies that it is
invertible in the usual sense. More surprisingly, the data for being
bi-invertible is in some ways simpler than the data for being
invertible, in that it can be shown that any two ways of showing a
function is bi-invertible turn out to be equal.

Before these can be studied, we must define what we mean for a
morphism in a higher category to be bi-invertible, as a generalisation
of the idea in type theory.

\begin{definition}
  Given a globular set \(G\) with identities and composition, with an
  \(n > 0\) cell \(f : x \to y\), a \emph{bi-invertible} structure on
  \(f\) is a tuple \((\rinv f, \linv f, f_R, f_L, f_R^{BI}, f_L^{BI})\)
  where:
  \begin{itemize}
  \item \(\rinv f\) is an \(n\)-cell \(y \to x\);
  \item \(\linv f\) is an \(n\)-cell \(y \to x\);
  \item \(f_R\) is an \((n+1)\)-cell \(f \comp_0 \rinv f \to \id_y\);
  \item \(f_L\) is an \((n+1)\)-cell \(\linv f \comp_0 f \to \id_x\).
  \item \(f_R^{BI}\) is a bi-invertible structure on \(f_R\).
  \item \(f_L^{BI}\) is a bi-invertible structure on \(f_L\).
  \end{itemize}
\end{definition}

Next it is shown that invertibility implies bi-invertibility. This is
fairly trivial though it will be written out in full to demonstrate proof by
coinduction.

\begin{lemma}\label{inv-to-bi-inv}
  Let \(f : x \to y\) be invertible. Then \(f\) is bi-invertible.
\end{lemma}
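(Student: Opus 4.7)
The plan is to define the bi-invertible structure on $f$ by corecursion on the quasi-invertible structure. Given $(\inv f, f_R, f_L, f_R^{QI}, f_L^{QI})$, the obvious move is to collapse the separate left and right inverses onto the two-sided one: take $\rinv f = \linv f = \inv f$, and reuse $f_R$ and $f_L$ unchanged for the first two cell components of the bi-invertible data. This is legal because the source and target types for $f_R$ and $f_L$ in the bi-invertible definition match exactly those given by the quasi-invertible data once $\rinv f$ and $\linv f$ are both set to $\inv f$.

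It then remains to produce $f_R^{BI}$ and $f_L^{BI}$, namely bi-invertible structures on $f_R$ and $f_L$. Here is where coinduction enters: we already have quasi-invertible structures $f_R^{QI}$ and $f_L^{QI}$ on these cells, and the statement we are proving is precisely that a quasi-invertible structure gives rise to a bi-invertible structure. So we apply the coinductive hypothesis to $f_R^{QI}$ and $f_L^{QI}$, obtaining $f_R^{BI}$ and $f_L^{BI}$ directly. The recursive calls are guarded because they occur underneath the bi-invertible tuple constructor, and the first four components of that tuple are produced without any recursive appeals.

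The only thing worth checking carefully is productivity, which is the typical pitfall for this kind of argument. Concretely, one should view the proof as a corecursively defined function $\Phi$ from $\mathsf{QuasiInvertible}(f)$ to $\mathsf{BiInvertible}(f)$ whose observations are: $\rinv f$, $\linv f$, $f_R$ and $f_L$ are read off immediately from the input, while the two tail observations $f_R^{BI}$ and $f_L^{BI}$ are $\Phi(f_R^{QI})$ and $\Phi(f_L^{QI})$. Since each recursive call of $\Phi$ appears strictly underneath a constructor, the definition is guarded in the sense of \cite{Coquand_1994,Gim_nez_1995} and the corresponding Agda definition will type-check with sized types, so no further obstacle arises. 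I would present this as a short paragraph exhibiting the tuple, followed by the remark that the two $BI$ fields are obtained by coinduction.
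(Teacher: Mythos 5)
Your proof is correct and follows essentially the same route as the paper: you identify both $\rinv f$ and $\linv f$ with $\inv f$, carry over $f_R$ and $f_L$ unchanged, and obtain $f_R^{BI}$ and $f_L^{BI}$ by guarded corecursive calls on $f_R^{QI}$ and $f_L^{QI}$, exactly as the paper's function $\mathsf{invToBiInv}$ does. Your additional remarks on productivity and guardedness are a welcome elaboration but do not change the substance of the argument.
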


\begin{proof}
  This is proven by constructing a corecursive function
  \(\mathsf{invToBiInv} : \Pi_{f : x \to y} (\mathsf{Invertible}(f)
  \to \mathsf{BiInvertible}(f))\). Given \(I = (\inv
  f,f_R^i,f_L^i,f_R^{QI}, f_L^{QI}) : \mathsf{Invertible}(f)\) we can
  construct \(\mathsf{invToBiInv}(f)(I)\):
  \begin{itemize}
  \item \(\rinv f = \inv f\);
  \item \(\linv f = \inv f\);
  \item \(f_R = f_R^i\);
  \item \(f_L = f_L^i\);
  \item corecursively, we let \(f_R^{BI} =
    \mathsf{invToBiInv}(f_R)(f_R^{QI})\);
  \item similarly, let \(f_L^{BI} =
    \mathsf{invToBiInv}(f_L)(f_L^{QI})\).
  \end{itemize}
  Then \(\mathsf{invToBiInv}(f)(I) : \mathsf{BiInvertible}(f)\) as
  required.
\end{proof}

\noindent Next, one of the most important and complex theorems of the
paper is proved.

\begin{theorem}\label{bi-inv-composition}
  For any \(n \in \mathbb{N}\) and cells \(f,g\) where \(g \comp_n f\)
  is well-defined, given a bi-invertible structure on \(f\) and \(g\),
  we can generate a bi-invertible structure on \(g \comp_n f\).
\end{theorem}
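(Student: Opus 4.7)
The plan is to imitate the classical proof that composites of invertible maps are invertible, replacing every equality by a higher cell built out of the axioms of a higher pre-category, and closing the construction off by a corecursive appeal to the theorem itself. I would begin with the natural candidates $\rinv{(g \comp_n f)} := \rinv f \comp_n \rinv g$ and $\linv{(g \comp_n f)} := \linv f \comp_n \linv g$, which have the correct globular shape.

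Next I would build the cancellation cells $(g \comp_n f)_R$ and $(g \comp_n f)_L$. For the right cancellation, the domain $(g \comp_n f) \comp_0 (\rinv f \comp_n \rinv g)$ is rearranged, via the assumed interchangers, associators, and unitors, into a form in which $f$ sits adjacent to $\rinv f$ and $g$ adjacent to $\rinv g$. Applying $f_R$ and $g_R$ then replaces each cancelling pair by an identity, and the assumed cells $\id \comp_{n+1} \id \to \id$ together with unitors collapse the resulting stack of identities to the intended $\id$. The left cancellation cell is built symmetrically using $f_L$ and $g_L$. Throughout, well-formedness of the intermediate string diagrams is guaranteed by the graphical calculus assumption on the pre-category.

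Finally, I would show that each of the two cancellation cells is itself bi-invertible by invoking the theorem corecursively. Every building block is bi-invertible: the structural cells are quasi-invertible by the pre-category axioms and hence bi-invertible by \Cref{inv-to-bi-inv}, while $f_R, g_R, f_L, g_L$ are bi-invertible by the input structures. Composing these pairwise via the theorem being proved promotes their bi-invertibility to the whole composite. This fits into a corecursive definition because each recursive appeal happens strictly beneath the outer layer of $\mathsf{BiInvertible}$ data already emitted.

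The main obstacle is the guardedness bookkeeping: the corecursive call is at the same dimension and is applied to a multi-step composite, so care is required to organise the proof so that each intermediate pairwise composition is itself a guarded corecursive call rather than an eager unfolding. A secondary wrinkle is the case split on whether $n = 0$, when the interchanger is unavailable and its role must be played by associators alone; one should also check that the chosen form of $(g \comp_n f)_R$ and $(g \comp_n f)_L$ really is obtainable as an iterated two-sided composite of bi-invertible pieces, so that the coinductive hypothesis suffices with no further apparatus.
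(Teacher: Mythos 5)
Your proposal matches the paper's proof in all essentials: the same candidate inverses $\rinv f \comp_n \rinv g$ and $\linv f \comp_n \linv g$, the same construction of the cancellation cells from associators, unitors, interchangers and the given $f_R, g_R, f_L, g_L$, the same case split on $n = 0$ versus $n > 0$, and the same corecursive appeal to the theorem (via \Cref{inv-to-bi-inv} and \Cref{identity} for the structural cells) to equip those cancellation cells with bi-invertible structures. The guardedness concern you raise is real but is handled exactly as you suggest, and is discharged formally in the Agda development via sized types.
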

\begin{proof}
  This will be proved by corecursion on the bi-invertible structure
  that is being generated. We will split into cases on \(n\).

  Suppose \(n = 0\). Let \((\rinv f, \linv f, f_R, f_L, f_R^{BI},
    f_L^{BI})\) and \((\rinv g, \linv g, g_R, g_L, g_R^{BI},
    g_L^{BI})\) be bi-invertible structures for \(f\) and \(g\)
    respectively. We define a bi-invertible structure on \(g
    \comp_0 f\) where:
    \begin{itemize}
    \item \(\rinv {(g \comp_0 f)} = \rinv f \comp_0 \rinv g\)
    \item \(\linv {(g \comp_0 f)} = \linv f \comp_0 \linv g\)
    \item \({(g \comp_0 f)}_R : g \comp_0 f \comp_0 \rinv f \comp_0
      \rinv g \to \id_z\) is the morphism
      \begin{equation*}
        a_{g, f, \rinv f \comp_0 \rinv g} \comp_0 (\id_g \comp_1 a_{f,\rinv f, \rinv g}^{-1}) \comp_0 (\id_g \comp_1 (f_R \comp_1 \id_{\rinv g})) \comp_0 (\id_g \comp_1 \lambda_{\rinv g}) \comp_0 g_R
      \end{equation*}
      This may be easier to understand from its string diagram, which
      is given below:
      \begin{center}
        \begin{tikzpicture}[every node/.style={rounded corners=5pt}]
          \node[draw] (GR) {\(g_R\)};
          \node[on grid, below=20pt of GR, draw] (FR) {\(f_R\)};
          \node[on grid, below left=30pt and 45pt of FR] (Base1) {\(g\)};
          \node[on grid, right=15pt of Base1] (Base2) {\(f\)};
          \node[on grid, right=60pt of Base2] (Base3) {\(\rinv f\)};
          \node[on grid, right=15pt of Base3] (Base4) {\(\rinv g\)};
          \draw (Base1) to ++(0,20pt) to [out=90,in=180] (GR);
          \draw (Base4) to ++(0,20pt) to [out=90,in=0] (GR);
          \draw (Base2) to ++(0,15pt) to [out=90,in=180] (FR);
          \draw (Base3) to ++(0,15pt) to [out=90,in=0] (FR);
        \end{tikzpicture}
      \end{center}
    \item \({(g \comp_0 f)}_L : \linv g \comp_0 \linv f \comp_0 f
      \comp_0 g \to \id_x\) is the morphism
      \begin{equation*}
        a_{\linv g, \linv f, f \comp_0 g} \comp_0 (\id_{\linv g} \comp_1 a_{})\comp_0 (\id_{\linv g} \comp_1 (f_L \comp_1 \id_g)) \comp_0 (\id_{\linv g} \comp_1 \lambda_g) \comp_0 g_L
      \end{equation*}
      which is given by the diagram:
      \begin{center}
        \begin{tikzpicture}
          \node[draw] (GL) {\(g_L\)}; \node[on grid, below=20pt of GL,
          draw] (FL) {\(f_L\)}; \node[on grid, below left=30pt and
          45pt of FL] (Base1) {\(\linv g\)}; \node[on grid,
          right=15pt of Base1] (Base2) {\(\linv f\)}; \node[on grid,
          right=60pt of Base2] (Base3) {\(f\)}; \node[on grid,
          right=15pt of Base3] (Base4) {\(g\)}; \draw (Base1) to
          ++(0,20pt) to [out=90,in=180] (GL); \draw (Base4) to
          ++(0,20pt) to [out=90,in=0] (GL); \draw (Base2) to
          ++(0,15pt) to [out=90,in=180] (FL); \draw (Base3) to
          ++(0,15pt) to [out=90,in=0] (FL);
        \end{tikzpicture}
      \end{center}
    \end{itemize}

    From the bi-invertible structures on \(f\) and \(g\),
    bi-invertible structures on \(f_L\), \(g_L\), \(f_R\), and \(g_R\)
    can be obtained. The associators and unitors used have
    bi-invertible structures by \cref{inv-to-bi-inv}
    and~\ref{inverse-invert}. All identity morphisms can be equipped
    with a bi-invertible structure using \cref{inv-to-bi-inv,identity}. Then by coinductive hypothesis we can generate
    bi-invertible structures on \({(g \comp_0 f)}_R\) and \({(g
      \comp_0 f)}_L\).

    Instead suppose \(n > 0\). Suppose we have \(\alpha: f \to g\) and \(\beta: h
    \to j\) where \(f \comp_{n-1} h\) and \(g \comp_{n-1} j\) are both
    well-defined. Further suppose:
    \begin{itemize}
    \item \((\rinv \alpha, \linv \alpha, \alpha_R, \alpha_L,
      \alpha_R^{BI}, \alpha_L^{BI})\) is a bi-invertible structure on
      \(\alpha\);
    \item \((\rinv \beta, \linv \beta, \beta_R, \beta_L, \beta_R^{BI},
      \beta_R^{BI})\) is a bi-invertible structure for \(\beta\).
    \end{itemize}
    Then we can define the following bi-invertible structure on
    \(\alpha \comp_n \beta\):
    \begin{itemize}
    \item \(\rinv {(\alpha \comp_n \beta)} = \rinv \alpha \comp_n
      \rinv \beta\)
    \item \(\linv {(\alpha \comp_n \beta)} = \linv \alpha \comp_n
      \linv \beta\)
    \item Let \({(\alpha \comp_n \beta)}_R\) be the following
      composition:
      \begin{equation*}
        (\alpha \comp_n \beta) \comp_0 (\rinv \alpha \comp_n \rinv \beta) \overset {i_{\alpha,\beta,\rinv \alpha, \rinv \beta}} \to (\alpha \comp_0 \rinv \alpha) \comp_n (\beta \comp_0 \rinv \beta) \overset {\alpha_R \comp_{n+1} \beta_R} \to \id_g \comp_n \id_j \to \id_{g \comp_{n-1} j}
      \end{equation*}
    \item Similarly, let \({(\alpha \comp_n \beta)}_L\) be the
      following composition:
      \begin{equation*}
        (\linv \alpha \comp_n \linv \beta) \comp_0 (\alpha \comp_n \beta) \overset {i_{\linv \alpha, \linv \beta, \alpha, \beta}} \to (\linv \alpha \comp_0 \alpha) \comp_n (\linv \beta \comp_0 \beta) \overset {\alpha _L \comp_{n+1} \beta _L} \to \id_f \comp_n \id_h \to \id_{f \comp_n h}
      \end{equation*}
    \end{itemize}
    Now, both \({(\alpha \comp_n \beta)}_R\) and \({(\alpha \comp_n
      \beta)}_L\) are the composition of cells with quasi-invertible
    structures given by the higher pre-category structure (and so have
    bi-invertible structures given by \Cref{inv-to-bi-inv}) and
    \(\alpha_L\), \(\alpha_R\), \(\beta_L\), and \(\beta_R\) which
    have structures \(\alpha_L^{BI}\), \(\alpha_R^{BI}\),
    \(\beta_L^{BI}\), and \(\beta_R^{BI}\). Therefore by coinductive
    hypothesis there are bi-invertible structures on both \(\rinv {(\alpha \comp_n \beta)}\) and \(\linv {(\alpha \comp_n \beta)}\).
\end{proof}

\begin{lemma}\label{inverses}
  Let \(f\) be a cell. Given a bi-invertible structure on \(f\), one
  can be generated on both \(\linv f\) and \(\rinv f\).
\end{lemma}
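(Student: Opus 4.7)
The plan is to construct a bi-invertible structure on \(\linv f\) directly, and obtain the one on \(\rinv f\) by a mirror-image construction. Since a bi-invertible structure is a coinductive record, the construction is at heart corecursive: we specify the outer data, and for each piece of structure that demands another bi-invertible structure we appeal to results already proved or to a coinductive call.

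The natural choice for both the right and left inverses of \(\linv f\) is \(f\) itself. With \(\rinv{(\linv f)} = f\), the required cell \((\linv f)_R : \linv f \comp_0 f \to \id_x\) is precisely \(f_L\), and the bi-invertible structure \((\linv f)_R^{BI}\) is simply \(f_L^{BI}\), which comes packaged with the hypothesis. This half costs nothing.

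The asymmetric side is \(\linv{(\linv f)} = f\), which demands a cell \((\linv f)_L : f \comp_0 \linv f \to \id_y\) that is not given to us. I would build it by the standard zigzag through \(\rinv f\):
\begin{equation*}
  f \comp_0 \linv f \to f \comp_0 \linv f \comp_0 (f \comp_0 \rinv f) \to f \comp_0 (\linv f \comp_0 f) \comp_0 \rinv f \to f \comp_0 \id_x \comp_0 \rinv f \to f \comp_0 \rinv f \to \id_y,
\end{equation*}
where the first step inserts \(\id_y\) using the inverse of \(f_R\) (available from \(f_R^{BI}\)), the middle steps are associators and unitors, then \(f_L\) collapses the interior, and \(f_R\) finishes. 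To equip \((\linv f)_L\) with a bi-invertible structure, I would apply \Cref{bi-inv-composition} componentwise: associators, unitors, and identity whiskerings come with quasi-invertible, hence bi-invertible, structures by the higher pre-category axioms together with \Cref{identity} and \Cref{inv-to-bi-inv}; \(f_R\) and \(f_L\) have \(f_R^{BI}\) and \(f_L^{BI}\); and the inverse of \(f_R\) acquires its bi-invertible structure by the coinductive call, namely this very lemma applied to \(f_R\) one dimension up.

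The main obstacle is precisely this last point: the construction is not purely ``downward'', because building \((\linv f)_L\) forces us to invert \(f_R\), so the corecursive step is applied not to \(\linv f\) but to an inversion of a higher cell. This is legitimate under guardedness, but it has to be laid out carefully so that each invocation of the lemma is guarded by constructors of \(\mathsf{BiInvertible}\) (namely the \(_R^{BI}\) and \(_L^{BI}\) fields). The mirror construction for \(\rinv f\) takes \(\linv{(\rinv f)} = \rinv{(\rinv f)} = f\), lets \((\rinv f)_L = f_R\), and builds \((\rinv f)_R : \rinv f \comp_0 f \to \id_x\) by the analogous zigzag through \(\linv f\) and an inverse of \(f_L\).
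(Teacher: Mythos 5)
Your construction is essentially the paper's proof with the roles of $\linv f$ and $\rinv f$ swapped: both take $f$ as the two-sided inverse, reuse the given cancellation cell and its packaged structure for the ``easy'' side, build the other side by the standard zigzag that inserts the opposite inverse, and obtain the bi-invertible structure on that composite via \Cref{bi-inv-composition} together with a guarded coinductive call of the lemma itself on the inverted $2$-cell. The proposal is correct and matches the paper's argument.
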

\begin{proof}
  A structure is given for \(\rinv f\) is bi-invertible and \(\linv
  f\) will follow by symmetry. As before let \((\rinv f, \linv f, f_R,
  f_L, f_R^{BI}, f_L^{BI})\) be a bi-invertible structure on \(f\).
  Then a bi-invertible structure on \(\rinv f\) can be generated by:
  \begin{itemize}
  \item \(\rinv {(\rinv f)} = f\)
  \item \(\linv {(\rinv f)} = f\)
  \item \({(\rinv f)}_R: \rinv f \comp_0 f \to \id\) is the morphism:
    \begin{equation*}
      \lambda_{\rinv f \comp_0 f}^{-1} \comp_0 (\rinv {f_L} \comp_1 \id_{\rinv f \comp_0 f}) \comp_0 a_{\linv f, f, \rinv f \comp_0 f} \comp_0 (\id_{\linv f} \comp_1 a_{f, \rinv f, f}^{-1}) \comp_0 (\id_{\linv f} \comp_1 (f_R \comp_1 \id_f)) \comp_0 (\id_{\linv f} \comp_1 \lambda_f) \comp_0 f_L
    \end{equation*}
    given by the string diagram:
    \begin{center}
      \begin{tikzpicture}
        \node (Finv) {\(\rinv f\)};
        \node [on grid, right=20pt of Finv] (F) {\(f\)};
        \node [on grid, above left=40pt and 15pt of Finv, draw] (FR) {\(f_R\)};
        \node [on grid, below left=30pt and 35pt of FR, draw] (FLInv) {\(\rinv {(f_L)}\)};
        \node [on grid, above right=55pt and 25pt of FLInv, draw] (FL) {\(f_L\)};
        \draw (Finv) to ++(0,25pt) to[out=90,in=0,looseness=0.8] (FR);
        \draw (FR) to[out=180,in=90,looseness=0.8] ++(-15pt,-15pt) to[out=-90,in=0,looseness=0.8] (FLInv);
        \draw (FLInv) to[out=180,in=-90,looseness=0.8] ++(-20pt,15pt) to ++(0pt,5pt) to[out=90,in=180] (FL);
        \draw (F) to ++(0,30pt) to[out=90,in=0] (FL);
      \end{tikzpicture}
    \end{center}
  \item \({(\rinv f)}_L: f \comp_0 \rinv f \to \id\) is given by
    \(f_R\)
  \end{itemize}
  Let \({(\rinv f)}_L\) be given \(f_R^{BI}\) as its bi-invertible
  structure. A bi-invertible structure on \({(\rinv f)}_R\) can be
  formed using \Cref{bi-inv-composition}, using that \(\rinv {f_L}\)
  has a bi-invertible structure given by coinductive hypothesis.
\end{proof}

\subsection{Half-Adjoint Inverses}\label{sec:hai}

Another type of equivalence is a half-adjoint equivalence. Whereas a
bi-invertible structure was a weakening of a quasi-invertible
structure, a half-adjoint invertible structure is a strict
strengthening of a quasi-invertible structure. This is done by adding
a coherence condition which effectively enforces \(f_L\) and \(f_R\)
to work ``nicely'' together. It turns out that there are two such
coherence conditions, known as snake equations or zigzag identities,
that can be added, yet each of these implies the other and so it
sufficient to provide one of these~\cite[Lemma
3.2]{nlab:adjoint_equivalence}. This is what gives rise to the name
\emph{half}-adjoint invertible.

\begin{definition}
  Given a globular set \(G\) with identities and composition, with an
  \(n\)-cell \(f : x \to y\), a \emph{half-adjoint invertible}
  structure on \(f\) is a
  tuple \((f', \alpha_f, \beta_f, \gamma_f, \alpha_f^{HAI},
  \beta_f^{HAI}, \gamma_f^{HAI})\) where:
  \begin{itemize}
  \item \(f'\) is an \(n\)-cell \(y \to x\);
  \item \(\alpha_f\) is an \((n+1)\)-cell \(f \comp_0 f' \to \id_y\);
  \item \(\beta_f\) is an \((n+1)\)-cell \(\id_x \to f' \comp_0 f\);
  \item \(\gamma_f\) is an \((n+2)\)-cell \((\rho_{f}^{-1} \comp_0
    (\id_f \comp_1 \beta_f) \comp_0 a_{f,f',f} \comp_0 (\alpha_f \comp_1 \id_f) \comp_0 \lambda_{f}) \to \id_{f}\);
  \item \(\alpha_f^{HAI}\) is a half-adjoint invertible structure on
    \(\alpha_f\);
  \item \(\beta_f^{HAI}\) is a half-adjoint invertible structure on
    \(\beta_f\);
  \item \(\gamma_f^{HAI}\) is a half-adjoint invertible structure on
    \(\gamma_f\).
  \end{itemize}

  Where \(\gamma_f\) can be graphically represented by the following
  diagram:
  \begin{center}
    \begin{tikzpicture}
      \node (Bottom) {\(f\)};
      \node[on grid, above right=20pt and 45pt of Bottom, draw] (Cup){\(\beta_f\)};
      \node[on grid, above left=30pt and 30pt of Cup,draw] (Cap){\(\alpha_f\)};
      \node[on grid, above right=20pt and 45pt of Cap] (Top) {\(f\)};
      \draw (Bottom) to++(0,35pt) to[out=90,in=180,looseness=0.8] (Cap);
      \draw (Cup) to[out=180,in=-90,looseness=0.8] ++(-15pt,15pt) to[out=90,in=0,looseness=0.8] (Cap);
      \draw (Cup) to[out=0,in=-90,looseness=0.8] ++(15pt,15pt) to (Top);

      \node[on grid, below right=40pt and 20pt of Top,
      font=\fontsize{15}{24}\selectfont] (Eq) {\(\Rightarrow\)};
      \node[above=0cm of Eq] (Gamma)
      {\(\gamma_f\)};
      \node[on grid, right=40pt of Top] (L1)
      {\(f\)}; \node[on grid, below=70pt of L1] (L2) {\(f\)};
      \draw (L1) to (L2);
    \end{tikzpicture}
  \end{center}
\end{definition}

\begin{theorem}
  A half-adjoint invertible structure can be restricted to a
  quasi-invertible structure on the same morphism.
\end{theorem}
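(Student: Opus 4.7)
The plan is to build, by corecursion, a function $\mathsf{haiToQI}$ that sends any half-adjoint invertible structure on a cell $f$ to a quasi-invertible structure on the same cell. Starting from a tuple $(f', \alpha_f, \beta_f, \gamma_f, \alpha_f^{HAI}, \beta_f^{HAI}, \gamma_f^{HAI})$, most of the data transfers straight across: I would set $\inv f := f'$ and $f_R := \alpha_f$. The coherence cell $\gamma_f$ and its HAI structure play no role in a quasi-invertible structure and are simply discarded.

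The one genuine obstacle is that the directions of $\beta_f$ and the desired $f_L$ disagree: a quasi-invertible structure requires $f_L : \inv f \comp_0 f \to \id_x$, whereas $\beta_f$ points the other way, namely $\id_x \to f' \comp_0 f$. To fix this, I would first apply $\mathsf{haiToQI}$ corecursively to $\beta_f^{HAI}$, obtaining a quasi-invertible structure on $\beta_f$; the inverse component of that structure is a cell $\inv{\beta_f} : f' \comp_0 f \to \id_x$, which I would take as $f_L$.

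It remains to supply the recursive fields $f_R^{QI}$ and $f_L^{QI}$. For $f_R^{QI}$, I would invoke $\mathsf{haiToQI}$ directly on $\alpha_f^{HAI}$, producing a quasi-invertible structure on $\alpha_f = f_R$. For $f_L^{QI}$, I would feed the quasi-invertible structure on $\beta_f$ obtained above into \cref{inverse-invert}, which yields a quasi-invertible structure on $\inv{\beta_f} = f_L$.

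There is no real hard step here: productivity of the corecursion is immediate because every corecursive call to $\mathsf{haiToQI}$ lies underneath a constructor of the quasi-invertible record. The only subtle point is the orientation mismatch of $\beta_f$, and that is dealt with uniformly by combining one corecursive call with \cref{inverse-invert}.
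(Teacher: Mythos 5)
Your proposal is correct and takes essentially the same route as the paper: both set \(\inv f = f'\), \(f_R = \alpha_f\), take \(f_L\) to be the inverse of \(\beta_f\), obtain \(f_R^{QI}\) by a corecursive call on \(\alpha_f^{HAI}\), and obtain \(f_L^{QI}\) from the corecursive restriction of \(\beta_f^{HAI}\) together with \cref{inverse-invert}. The only cosmetic difference is that the paper reads the cell \(f_L = \beta_f'\) directly off the first component of \(\beta_f^{HAI}\) rather than extracting it from the output of the corecursive call, but this is the same cell.
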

\begin{proof}
  Suppose \(f\) has half-adjoint invertible structure \((f',
  \alpha_f, \beta_f, \gamma_f, \alpha_f^{HAI}, \beta_f^{HAI},
  \gamma_f^{HAI})\). Then let:
  \begin{itemize}
  \item \(\inv f = {f'}\);
  \item \(f_R = \alpha_f\);
  \item \(f_L = \beta_f'\).
  \end{itemize}
  By coinduction, \(\alpha_f^{HAI}\) can be restricted to a
  quasi-invertible structure on \(f_R\) and \(\beta_f^{HAI}\) can be
  restricted to quasi-invertible structure on \(\beta_f\) which
  induces a structure on \(\inv {\beta_f}\) by \Cref{inverse-invert}.
\end{proof}

\begin{cor}
  A half-adjoint invertible structure can be restricted to a
  bi-invertible structure on the same morphism.
\end{cor}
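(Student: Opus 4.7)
The corollary is an immediate consequence of chaining the two previous results. The plan is to first apply the preceding theorem to convert the half-adjoint invertible structure on \(f\) into a quasi-invertible structure on \(f\), and then feed the resulting quasi-invertible structure into the corecursive function \(\mathsf{invToBiInv}\) produced in the proof of \Cref{inv-to-bi-inv}. The composite of these two constructions gives a bi-invertible structure on \(f\) directly.

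In more detail, given a half-adjoint invertible structure \((f', \alpha_f, \beta_f, \gamma_f, \alpha_f^{HAI}, \beta_f^{HAI}, \gamma_f^{HAI})\) on \(f\), the previous theorem yields a quasi-invertible structure whose underlying inverse is \(f'\), whose right cell is \(\alpha_f\), and whose left cell is \(\inv{\beta_f}\), together with quasi-invertible structures on these two cells (obtained by restricting \(\alpha_f^{HAI}\) and the structure on \(\inv{\beta_f}\) coming from \Cref{inverse-invert}). Applying \(\mathsf{invToBiInv}\) to this data produces a bi-invertible structure on \(f\) by taking \(\rinv{f} = \linv{f} = f'\), \(f_R = \alpha_f\), \(f_L = \inv{\beta_f}\), and corecursively converting the quasi-invertible structures on \(f_R\) and \(f_L\) into bi-invertible ones.

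There is no real obstacle here: both ingredients are already available, and no new coinductive argument is required since the corecursion is entirely handled inside \Cref{inv-to-bi-inv}. The only point worth noting is that the two steps can be composed without issue because neither construction modifies the underlying \(n\)-cells of the structure, only repackaging the invertibility witnesses at each dimension. One could alternatively give a direct corecursive proof that builds the bi-invertible structure from the half-adjoint invertible structure in one pass, but factoring the argument through the previous theorem keeps the proof to a single line.
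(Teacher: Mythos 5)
Your proposal is correct and matches the paper's intended argument: the corollary is stated without proof precisely because it is the immediate composite of the preceding theorem (half-adjoint to quasi-invertible) with \Cref{inv-to-bi-inv} (quasi-invertible to bi-invertible), exactly as you describe.
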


\begin{theorem}\label{bi-inv-to-hai}
  A bi-invertible structure \((\rinv f, \linv f, f_R, f_L, \dots)\) on
  a cell \(f\) induces a half-adjoint invertible structure \((\rinv
  f, f_R, \dots)\) on \(f\).
\end{theorem}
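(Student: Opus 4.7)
The plan is to corecursively construct a half-adjoint invertible structure on $f$ by taking $f' = \rinv f$ and $\alpha_f = f_R$ directly from the given bi-invertible data, and then manufacturing $\beta_f$, $\gamma_f$, and the three HAI structures from the rest of the bi-invertible information combined with the axioms of a higher pre-category. The three HAI structures will be supplied by the coinductive hypothesis, so the main work is to define $\beta_f$ and $\gamma_f$ in such a way that the cells they are built from are known to be bi-invertible.

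For $\beta_f : \id_x \to \rinv f \comp_0 f$, I would first apply \Cref{inverses} to the given bi-invertible structure on $f$. This produces a bi-invertible structure on $\rinv f$, one component of which is the cell $(\rinv f)_R : \rinv f \comp_0 f \to \id_x$; moreover $(\rinv f)_R$ itself carries a bi-invertible structure, so it has a right inverse $\rinv{((\rinv f)_R)} : \id_x \to \rinv f \comp_0 f$, which I would take as $\beta_f$. A second application of \Cref{inverses} then equips $\beta_f$ with its own bi-invertible structure, which is exactly what the coinductive hypothesis needs.

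For $\gamma_f$, I would build the 3-cell witnessing the snake identity from the graphical calculus. Because $\beta_f$ was chosen as a right inverse of $(\rinv f)_R$, the composite
\begin{equation*}
\rho_f^{-1} \comp_0 (\id_f \comp_1 \beta_f) \comp_0 a_{f,\rinv f,f} \comp_0 (f_R \comp_1 \id_f) \comp_0 \lambda_f
\end{equation*}
expands, after unfolding the formula for $(\rinv f)_R$ from the proof of \Cref{inverses}, into a zigzag that can be straightened. Concretely I would produce $\gamma_f$ in two stages: first rewrite the zigzag using the unit cell $f_R \comp_0 \rinv{((\rinv f)_R)} \to \id$ supplied by the bi-invertible structure on $(\rinv f)_R$, and second apply the graphical-calculus axiom of \Cref{def:higher-cat} to contract the resulting planar-isotopic diagram to $\id_f$. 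Each intermediate cell used in this construction is either an associator, unitor, interchanger, identity-on-identity, or a component of some bi-invertible structure already in hand, so all of them are bi-invertible by \Cref{inv-to-bi-inv}, \Cref{identity}, and \Cref{inverses}. Hence $\gamma_f$, being a composite of bi-invertible cells, inherits a bi-invertible structure via \Cref{bi-inv-composition}.

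Finally, since $\alpha_f = f_R$ is bi-invertible by hypothesis, $\beta_f$ is bi-invertible by the construction above, and $\gamma_f$ is bi-invertible as just argued, the coinductive call to the theorem being proved produces $\alpha_f^{HAI}$, $\beta_f^{HAI}$, and $\gamma_f^{HAI}$, completing the corecursion. The main obstacle is the explicit construction of $\gamma_f$: the abstract argument that the zigzag can be straightened is clear from the graphical calculus, but one has to bridge from $\beta_f$ (defined opaquely as the right inverse of $(\rinv f)_R$) to the form of that zigzag, which requires invoking the bi-invertible structure on $(\rinv f)_R$ to insert a cancellation pair before the graphical isotopy can be applied.
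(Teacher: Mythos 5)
Your overall scaffolding matches the paper's: you take \(f' = \rinv f\) and \(\alpha_f = f_R\), build \(\beta_f\) and \(\gamma_f\) out of the remaining bi-invertible data together with the pre-category axioms, equip each with a bi-invertible structure via \Cref{bi-inv-composition}, \Cref{inverses} and \Cref{inv-to-bi-inv}, and discharge the three HAI components by the coinductive hypothesis. Your \(\beta_f = \rinv{((\rinv f)_R)}\) is a legitimate minor variant of the paper's explicit diagram (when unfolded via \Cref{inverses} and \Cref{bi-inv-composition} it is essentially the same composite, routed through \(\linv f\) and \(\rinv{(f_R)}\)), and it has the correct type and a bi-invertible structure. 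The gap is in \(\gamma_f\), which is precisely where the paper does its real work. First, the ``unit cell \(f_R \comp_0 \rinv{((\rinv f)_R)} \to \id\)'' you invoke does not exist: the bi-invertible structure on \((\rinv f)_R\) supplies \(((\rinv f)_R)_R : (\rinv f)_R \comp_0 \rinv{((\rinv f)_R)} \to \id_{\id_x}\), whereas \(f_R\) is a cap on \(f \comp_0 \rinv f\) (living over \(y\)) and is not vertically composable with \(\rinv{((\rinv f)_R)}\) (living over \(x\)) at all. The cap appearing in the snake composite is \(f_R\), not \((\rinv f)_R\), and bridging between the two is exactly the content the proof has to supply; you cannot simply cite the cancellation datum of \((\rinv f)_R\).

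Second, the claim that after one cancellation the residual diagram ``contracts to \(\id_f\) by the graphical-calculus axiom'' is false. \Cref{def:higher-cat} only provides \(3\)-cells for planar isotopies, and a planar isotopy cannot delete nodes: the snake diagram, with its cup node \(\beta_f\) and cap node \(f_R\), is not planar isotopic to the bare strand \(\id_f\) --- this is exactly why the zigzag identity is an extra coherence rather than a consequence of isotopy. The paper's construction of \(\gamma_f\) is a composite of \emph{three} stages: a planar isotopy that rearranges the (unfolded) diagram so that \(\rinv{(f_R)}\) sits against \(f_R\) and the inverse of \(f_L\) sits against \(f_L\), then the cancellation \(3\)-cell \((f_R)_L\) to pop the first bubble, then \((f_L)_R\) to pop the second. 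To repair your argument you must unfold \(\rinv{((\rinv f)_R)}\) into its constituent nodes and perform two separate node-cancellations using the invertibility \(3\)-cells of \(f_R\) and \(f_L\), with the isotopy axiom used only to bring each cancelling pair into composable position.
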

\begin{proof}
  Let \((\rinv f, \linv f, f_R, f_L, f_R^{BI}, f_L^{BI})\) be a
  bi-invertible structure on \(f\). Then we give the right-adjoint
  invertible structure \((\rinv f, f_R, \beta_f, \gamma_f, f_R^{HAI},
  \beta_f^{HAI}, \gamma_f^{HAI})\) where:
  \begin{itemize}
  \item \(\beta_f\) is given by the following diagram:
    \begin{center}
      \begin{tikzpicture}
        \node (Finv) {\(f\)};
        \node [on grid, left=20pt of Finv] (F) {\({\rinv f}\)};
        \node [on grid, below right=40pt and 20pt of Finv, draw] (FRInv) {\(\rinv {(f_R)}\)};
        \node [on grid, above right=30pt and 35pt of FRInv, draw] (FL) {\(f_L\)};
        \node [on grid, below left=55pt and 30pt of FL, draw] (FLInv) {\(\linv {(f_L)}\)};
        \draw (Finv) to ++(0,-25pt) to[out=-90,in=180,looseness=0.9] (FRInv);
        \draw (FRInv) to[out=0,in=-90,looseness=0.6] ++(20pt,15pt) to ++(0,5pt) to[out=90,in=180] (FL);
        \draw (FL) to[out=0,in=90] ++(15pt,-15pt) to ++(0pt,-5pt) to[out=-90,in=0] (FLInv);
        \draw (F) to ++(0,-30pt) to[out=-90,in=180] (FLInv);
      \end{tikzpicture}
    \end{center}
  \item \(\gamma_f\) is given by the following diagram:
    \begin{center}
      \begin{tikzpicture}
        \node (G) {\(f\)};
        \node [on grid, below left=30pt and 35pt of G, draw] (FR) {\(f_R\)};
        \node [on grid, below left=80pt and 15pt of FR] (G') {\(f\)};
        \node [on grid, below right=60pt and 20pt of G, draw] (FRInv) {\(\rinv {(f_R)}\)};
        \node [on grid, above right=30pt and 35pt of FRInv, draw] (FL) {\(f_L\)};
        \node [on grid, below left=55pt and 30pt of FL, draw] (FLInv) {\(\linv {(f_L)}\)};
        \draw (G) to ++(0,-45pt) to[out=-90,in=180,looseness=0.8] (FRInv);
        \draw (FRInv) to[out=0,in=-90,looseness=0.8] ++(20pt,15pt) to[out=90,in=180,looseness=0.9] (FL);
        \draw (FL) to[out=0,in=90,looseness=0.9] ++(15pt,-15pt) to[out=-90,in=0] (FLInv);
        \draw (FR) to[out=0,in=90,looseness=0.9] ++(15pt,-15pt) to[out=-90,in=180] (FLInv);
        \draw (FR) to[out=180,in=90,looseness=0.9] ++(-15pt,-15pt) to (G');

        \node [on grid, below right=50pt and 90pt of G,font=\fontsize{15}{24}\selectfont] (A1) {\(\Rightarrow\)};
        \node [above=0pt of A1] (A1Text) {isotopy};

        \node [on grid, right=110pt of G] (G1) {\(f\)};
        \node [on grid, below right=40pt and 20pt of G1, draw] (FRInv1) {\(\rinv {(f_R)}\)};
        \node [on grid, below=20pt of FRInv1, draw] (FR1) {\(f_R\)};
        \node [on grid, above right=30pt and 40pt of FRInv1, draw] (FL1) {\(f_L\)};
        \node [on grid, below right=30pt and 40pt of FR1, draw] (FLInv1) {\(\linv {(f_L)}\)};
        \node [on grid, below left=40pt and 20pt of FR1] (G'1) {\(f\)};
        \draw (G1) to ++(0,-25pt) to[out=-90,in=180,looseness=0.8] (FRInv1);
        \draw (FRInv1) to[out=0,in=-90,looseness=0.8] ++(20pt,15pt) to[out=90,in=180] (FL1);
        \draw (FL1) to[out=0,in=90] ++(20pt,-15pt) to ++(0,-50pt) to[out=-90,in=0,looseness=0.8] (FLInv1);
        \draw (FR1) to[out=0,in=90] ++(20pt,-15pt) to[out=-90,in=180,looseness=0.8] (FLInv1);
        \draw (FR1) to[out=180,in=90] ++(-20pt,-15pt) to (G'1);

        \node [on grid, below right=50pt and 100pt of G1,font=\fontsize{15}{24}\selectfont] (A2) {\(\Rightarrow\)};
        \node [above=0pt of A2] (A2Text) {\((f_R)_L\)};

        \node [on grid, right=120pt of G1] (G2) {\(f\)};
        \node [on grid, below=100pt of G2] (G'2) {\(f\)};
        \node [on grid, below right=35pt and 35pt of G2, draw] (FL2) {\(f_L\)};
        \node [on grid, below=30pt of FL2, draw] (FLInv2) {\(\linv {(f_L)}\)};
        \draw (G2) to (G'2);
        \draw (FL2) to[out=0,in=90] ++(20pt,-15pt) to[out=-90,in=0,looseness=0.8] (FLInv2);
        \draw (FL2) to[out=180,in=90] ++(-20pt,-15pt) to[out=-90,in=180,looseness=0.8] (FLInv2);

        \node [on grid, below right=50pt and 80pt of G2,font=\fontsize{15}{24}\selectfont] (A3) {\(\Rightarrow\)};
        \node [above=0pt of A3] (A3Text) {\((f_L)_R\)};

        \node [on grid, right=100pt of G2] (G3) {\(f\)};
        \node [on grid, below=100pt of G3] (G'3) {\(f\)};
        \draw (G3) to (G'3);
      \end{tikzpicture}

    \end{center}
  \item \(f_R^{HAI}\) can be generated by coinduction hypothesis on
    \(f_R^{BI}\).
  \item A bi-invertible structure can be formed for \(\beta_f\) as it
    is a composition of identities, morphisms with given bi-invertible
    structures and inverses of those morphisms. Then \(\beta_f^{HAI}\)
    can be formed coinductively.
  \item A bi-invertible structure can be put on \(\gamma_f\) as it is
    the composite of morphisms that have a given bi-invertible
    structure and an isotopy, which has a quasi-invertible structure
    given by \(G\) being a higher pre-category. Therefore, as before,
    \(\gamma_f^{HAI}\) can be formed by coinduction.
  \end{itemize}
  Hence, \((\rinv f, f_R, \beta_f, \gamma_f, f_R^{HAI},
  \beta_f^{HAI}, \gamma_f^{HAI})\) is in the form required.
\end{proof}

\begin{cor}\label{cor:equiv}
  Let \(G\) be a higher pre-category. Let \(n > 0\) and \(f\) be an
  \(n\)-cell of \(G\). Then the following are equivalent:
  \begin{itemize}
  \item \(f\) has a bi-invertible structure.
  \item \(f\) has a quasi-invertible structure.
  \item \(f\) has a half-adjoint invertible structure.
  \end{itemize}
\end{cor}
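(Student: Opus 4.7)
The plan is simply to close a cycle of implications using the three results already established in this section. Specifically, \Cref{inv-to-bi-inv} gives that any quasi-invertible structure on $f$ yields a bi-invertible structure, \Cref{bi-inv-to-hai} gives that any bi-invertible structure yields a half-adjoint invertible structure, and the (unnumbered) theorem preceding \Cref{bi-inv-to-hai} shows that any half-adjoint invertible structure restricts to a quasi-invertible structure. Composing these three implications in a cycle gives mutual implication between all three properties, and hence equivalence in the sense of the corollary.

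Concretely, I would write the proof in three short sentences, one per implication, citing the relevant numbered result for each. I would be careful to note that the hypothesis $n > 0$ is compatible with each of the three cited results: \Cref{bi-inv-to-hai} and the definition of bi-invertibility both require $n > 0$, while the quasi-invertible and half-adjoint invertible definitions do not themselves need this, so the restriction on $n$ is inherited from the bi-invertible case and causes no trouble in the cycle.

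There is no real obstacle here since the corollary is a purely formal consequence of previously proved results. The only thing to check is that the cycle of implications really does pass through all three notions, which it does: starting from any one of \emph{bi}, \emph{quasi} or \emph{HAI}, one can walk around the triangle $\text{quasi} \Rightarrow \text{bi} \Rightarrow \text{HAI} \Rightarrow \text{quasi}$ to reach either of the other two. Hence no additional coinductive argument is required; the work has already been done in the preceding lemmas and theorems.
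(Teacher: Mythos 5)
Your proposal is correct and matches the paper's (implicit) argument exactly: the corollary follows by composing the cycle quasi \(\Rightarrow\) bi (\Cref{inv-to-bi-inv}), bi \(\Rightarrow\) HAI (\Cref{bi-inv-to-hai}), and HAI \(\Rightarrow\) quasi (the unnumbered theorem in \Cref{sec:hai}), which is why the paper states it as a corollary with no further proof. Your remark that the \(n>0\) hypothesis is inherited from the bi-invertible case is a sensible extra check and does not change the argument.
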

It should be stressed that this is an equivalence in that each
structure can be obtained from the others, and not that the various
transformations are in any way inverses to each other.

\section{Towards Contractibility}\label{sec:contractibility}

\Cref{sec:hai} (in particular \Cref{cor:equiv}) achieves one of the
goals set out in the introduction. The other property stated was that
bi-invertibility and half-adjoint invertibility should be contractible
types. In type theory, a type is contractible if there is an element
in that type to which all other terms of that type are propositionally
equal. Any contractible type is then equivalent to the unit type.
The natural analogue to the higher categorical setting would be to say
that a category is contractible if it is equivalent to the terminal
category.

\begin{definition}
  The \emph{terminal globular set} \(T\) is the globular set with exactly one
  \(n\)-cell for each \(n\). There is then only one choice for the
  source and target of each cell and all required equations hold by
  this uniqueness. We can further add identities and composition to
  this globular set. There is only one way these could be defined due
  to uniqueness.
\end{definition}

What this does not answer is what a suitable notion of equivalence
should be. As this could be dependent of the specific definition of a
higher category, we do not try to answer this here. Instead, we give a
notion of contractibility based on the work that has already been
done. Due to the simplicity of the terminal category it is highly
likely that this is a sufficient condition for contractibility.

\begin{definition}
  Let \(G\) be a globular set. \(G\)
  is \emph{contractible} if given any parallel cells \(f\) and \(g\)
  there is a cell \(f \to g\).
\end{definition}

In this section we aim to define a higher category of cells between
invertibility data. While the task of showing contractibility falls
beyond the scope of this paper, we do manage to prove partial
contractibility results for the bi-invertible case, and suggest how
this could be continued.

The definition of the cells between these types is largely inspired
by~\cite[Lemma 4.2.5]{hottbook}. This gives us a compositional way to
think about equivalence between these two types. It can be formulated
as follows: Suppose we have a cell \(f : x \to y\) and two
bi-invertible structures on it \((\rinv f, \linv f, f_R, f_L,
f_R^{BI}, f_L^{BI})\) and \((\rinv f{}' , \linv f{}', f_R', f_L',
f_R^{BI}{}', f_L^{BI}{}')\). A \emph{bi-invertible 1-morphism} is a tuple
consisting of the following data:
\begin{itemize}
\item \(\phi_R : \rinv f \to \rinv f{}'\);
\item \(\phi_R^{BI}\) is a bi-invertible structure on \(\phi_R\);
\item \(\psi_R : (\id_f \comp_1 \phi_R) \comp_0 f_R' \to f_R\);
\item \(\psi_R^{BI}\) is a bi-invertible structure on \(\psi_R\);
\item \(BIC_R\) is a bi-invertible 1-morphism from the induced bi-invertible structure on
  \((\id_f \comp_1 \phi_R) \comp_0 f_R'\) to \(f_R^{BI}\);
\item \(\phi_L\), \(\phi_L^{BI}\), \(\psi_L\), \(\psi_L^{BI}\), and
  \(BIC_L\) are similar but symmetric to above.
\end{itemize}

It should be noted that this is quite a natural way to define a cell
between these structures. We simply defined a cell for each part of
the structure separately, with pretty much only one way of defining
each such cell. The only oddity may be that we require these cells to
be bi-invertible. This is simply so that \((\id_f \comp_1 \phi_R)
\comp_0 f_R'\) has a canonical bi-invertible structure. Given this,
the construction can be continued. Suppose we have \((\phi_R, \psi_R,
 \dots)\) and \((\phi_R', \psi_R', \dots)\). Then a \emph{bi-invertible 2-morphism} can be defined as:
\begin{itemize}
\item a bi-invertible 1-morphism \(\epsilon: \phi_R \to \phi_R'\);
\item a bi-invertible 1-morphism from:
  \begin{center}
    \begin{tikzpicture}
      \node (Base1) {\(f\)};
      \node [on grid, right=30pt of Base1] (Base2) {\(\rinv f\)};
      \node [on grid, above=20pt of Base2, draw] (G1) {\(\phi_R\)};
      \node [on grid, above left=20pt and 15pt of G1, draw] (Top1) {\(f_R'\)};
      \draw (Base1) to ++(0,20pt)[out=90,in=180,looseness=0.8] to (Top1);
      \draw (Base2) to (G1);
      \draw (G1) to[out=90, in=0,looseness=0.9] (Top1);

      \node [on grid, right=25pt of G1,font=\fontsize{15}{24}\selectfont](M1){\(\Rightarrow\)};
      \node [above=0pt of M1]{\(\epsilon\)};

      \node [on grid, below right=20pt and 25pt of M1](Base3) {\(f\)};
      \node [on grid, right=30pt of Base3] (Base4) {\(\rinv f\)};
      \node [on grid, above=20pt of Base4, draw] (G2) {\(\phi_R'\)};
      \node [on grid, above left=20pt and 15pt of G2, draw] (Top2) {\(f_R'\)};
      \draw (Base3) to ++(0,20pt)[out=90,in=180,looseness=0.8] to (Top2);
      \draw (Base4) to (G2);
      \draw (G2) to[out=90, in=0,looseness=0.9] (Top2);

      \node [on grid, right=25pt of G2,font=\fontsize{15}{24}\selectfont](M2){\(\Rightarrow\)};
      \node [above=0pt of M2] {\(\psi_R'\)};

      \node [on grid, below right=20pt and 25pt of M2](Base5) {\(f\)};
      \node [on grid, right=30pt of Base5] (Base6) {\(\rinv f\)};
      \node [on grid, above right=40pt and 15pt of Base5,draw] (Top3) {\(f_R\)};
      \draw (Base5) to ++(0cm,20pt)[out=90,in=180,looseness=0.8] to (Top3);
      \draw (Base6) to ++(0cm,20pt)[out=90,in=0,looseness=0.8] to (Top3);
    \end{tikzpicture}
  \end{center}
  to \(\psi_R\);
\item similar constructions for the remainder of the parts.
\end{itemize}

We conjecture that continuing in this way will generate a higher
category structure. Next is the main result of this section.

\begin{theorem}
  Given a cell \(f : x \to y\), there is a bi-invertible 1-morphism
  between any two bi-invertible structures on \(f\).
\end{theorem}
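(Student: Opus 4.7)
The plan is to construct the bi-invertible 1-morphism by corecursion. Given bi-invertible structures $(\rinv f, \linv f, f_R, f_L, f_R^{BI}, f_L^{BI})$ and $(\rinv f{}', \linv f{}', f_R', f_L', f_R^{BI}{}', f_L^{BI}{}')$ on $f$, the outer task is to produce $\phi_R, \psi_R$ (and symmetric counterparts $\phi_L, \psi_L$) as explicit composites all of whose constituents carry bi-invertible structures, so that $\phi_R^{BI}$ and $\psi_R^{BI}$ follow automatically from \Cref{bi-inv-composition}, while the inner components $BIC_R$ and $BIC_L$ are obtained from a corecursive invocation of the theorem.

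Concretely, I would define $\phi_R : \rinv f \to \rinv f{}'$ as the two-step equivalence
\[
\rinv f \simeq \linv f \comp_0 f \comp_0 \rinv f \simeq \linv f \simeq \linv f \comp_0 f \comp_0 \rinv f{}' \simeq \rinv f{}',
\]
realised as a vertical composite of unitors, associators, and the cells $\inv{f_L}, f_R, \rinv{(f_R')}, f_L$ suitably whiskered with identities via $\comp_1$. Each constituent carries a bi-invertible structure: the structural cells via \Cref{inv-to-bi-inv} applied to the quasi-invertible data postulated of a higher pre-category, and the inverse cells $\inv{f_L}, \rinv{(f_R')}$ via \Cref{inverses} applied to $f_L^{BI}$ and $f_R^{BI}{}'$. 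Hence \Cref{bi-inv-composition} equips $\phi_R$ with $\phi_R^{BI}$, and $\phi_L$ is treated symmetrically. The 3-cell $\psi_R : (\id_f \comp_1 \phi_R) \comp_0 f_R' \to f_R$ is then constructed by whiskering the composite for $\phi_R$ by $\id_f$ on the left, postcomposing with $f_R'$, and observing that the resulting string diagram contains an $f_R' \comp_0 \rinv{(f_R')}$ bigon that collapses via the bi-invertible cancellation data on $f_R'$ (this being precisely why $\phi_R^{BI}$ is part of the definition rather than just $\phi_R$), together with an $f_L \comp_0 \inv{f_L}$ bigon that collapses analogously; a planar isotopy witnessed by the higher pre-category's respect for the graphical calculus then reduces what remains to $f_R$. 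Applying \Cref{bi-inv-composition} once more equips $\psi_R$ with $\psi_R^{BI}$.

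Finally, the cell $(\id_f \comp_1 \phi_R) \comp_0 f_R'$ now carries an induced bi-invertible structure, and is linked by $\psi_R$ to $f_R$ which carries $f_R^{BI}$; a corecursive invocation of the theorem applied to these two structures (with the implicit identification via $\psi_R$) produces $BIC_R$, and $BIC_L$ follows symmetrically. The principal obstacle is the explicit graphical verification that the zigzag composing $\phi_R$ with $f_R'$ simplifies to $f_R$; this is the same sort of triangle-identity manipulation that underpins the proof of \Cref{bi-inv-to-hai}, lengthy but conceptually routine once one commits to the string-diagrammatic presentation. Guardedness of the corecursion is immediate, since every recursive reference to the theorem sits inside a constructor of the coinductive record of bi-invertible 1-morphisms, so the definition is well-posed.
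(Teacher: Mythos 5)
Your overall skeleton matches the paper's: produce $\phi_R$ and $\psi_R$ as explicit composites of cells that already carry bi-invertible structures, get $\phi_R^{BI}$ and $\psi_R^{BI}$ from \Cref{bi-inv-composition}, and obtain $BIC_R$ by a guarded corecursive call. The gap is in the construction of $\psi_R$. Trace the strands of $(\id_f \comp_1 \phi_R) \comp_0 f_R'$ for your choice of $\phi_R$: the cup $\rinv{(f_R')}$ creates a pair of strands $f^{\mathrm{new}}$ and $\rinv f{}'$, but the cap $f_R'$ at the top consumes the \emph{original} $f$ strand together with that $\rinv f{}'$; likewise the cup $\rinv{(f_L)}$ creates $\linv f$ and an $f$ strand, while the cap $f_L$ consumes that $\linv f$ together with the \emph{other} new $f$ strand. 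In each case the cap and cup share exactly one leg, so these configurations are snakes (zigzags), not bigons. The cancellation data $(f_R')_R$, $(f_L)_R$, etc., only collapse a genuine bigon, i.e.\ a vertical composite such as $f_R' \cdot \rinv{(f_R')} : \id_y \to \id_y$ where the cap sits on both legs of the cup; and a planar isotopy cannot turn a snake into a bigon, since it preserves the nodes and their connectivity. Eliminating a snake is precisely the zigzag/triangle identity, which does \emph{not} follow from quasi- or bi-invertibility data alone --- this is the entire reason half-adjoint structures exist, and it is why the paper's proof first applies \Cref{bi-inv-to-hai} to the primed structure to obtain a unit $\beta_{f'}$ and coherence cell $\gamma_{f'}$, chooses $\phi_R$ to be the conjugation $\rinv f \Rightarrow \rinv f{}' \comp_0 f \comp_0 \rinv f \Rightarrow \rinv f{}'$ built from that $\beta_{f'}$ and $f_R$, and then defines $\psi_R$ by applying $\gamma_{f'}$ to collapse the one resulting snake as a primitive of the half-adjoint structure.

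So as written your $\psi_R$ does not exist by the means you describe. The fix is essentially to follow the paper: either route $\phi_R$ through the half-adjoint unit so that the offending zigzag is exactly the one $\gamma$ kills, or redo for each of your two snakes the kind of argument used to build $\gamma_f$ in \Cref{bi-inv-to-hai} (which uses the higher cancellation cells $(f_R)_L$ and $(f_L)_R$, not triangle identities). Your closing remark that the verification is ``the same sort of triangle-identity manipulation that underpins the proof of \Cref{bi-inv-to-hai}'' concedes the point without supplying it: that manipulation is the substantive content of the theorem, not a routine diagram chase.
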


\begin{proof}
  Take structures \((\rinv f, \linv f, f_R, f_L, f_R^{BI}, f_L^{BI})\)
  and \((\rinv f{}' , \linv f{}', f_R', f_L', f_R^{BI}{}', f_L^{BI}{}')\).
  It will be sufficient to show that \(\phi_R\) and \(\psi_R\) can be
  constructed. Then the cell between bi-invertible structures can be
  obtained by coinductive hypothesis and the rest of the data follows
  from symmetry.

  Using \Cref{bi-inv-to-hai}, a half-adjoint invertible
  structure \((\rinv f{}', f_R', \beta_{f'}, \gamma_{f'}, {f_R'}^{HAI}, \beta_{f'}^{HAI}, \gamma_{f'}^{HAI})\) on \(f\) can be obtained.
  Then let \(\phi_R\) be the cell given by the diagram:
  \begin{center}
    \begin{tikzpicture}
      \node (Bottom) {\(\rinv f\)};
      \node [on grid, above left=20pt and 45pt of Bottom,draw](Cup){\(\beta\)};
      \node [on grid, above right=30pt and 30pt of Cup, draw](Cap){\(f_R\)};
      \node [on grid, above left=20pt and 45pt of Cap](Top){\(\rinv f{}'\)};
      \draw (Bottom) to ++(0,30pt) to[out=90,in=0,looseness=0.8] (Cap);
      \draw (Cap) to[out=180,in=90,looseness=0.8] ++(-15pt,-15pt) to[out=-90,in=0,looseness=0.9] (Cup);
      \draw (Cup) to[out=180,in=-90,looseness=0.9] ++(-15pt,15pt) to (Top);
    \end{tikzpicture}
  \end{center}
  There is a clear bi-invertible structure on this morphism as it a
  composition of various bi-invertible cells. We now let
  \(\psi_R\) be:
  \begin{center}
    \begin{tikzpicture}
      \node (Bottom) {\(\rinv f\)};
      \node [on grid, above left=20pt and 45pt of Bottom,draw](Cup){\(\beta\)};
      \node [on grid, above right=30pt and 30pt of Cup, draw](Cap){\(f_R\)};
      \node [on grid, above left=30pt and 30pt of Cup,draw](Top){\(f_R'\)};
      \node [on grid, left=90pt of Bottom] (End) {\(f\)};
      \draw (Bottom) to ++(0,35pt) to[out=90,in=0,looseness=0.8] (Cap);
      \draw (Cap) to[out=180,in=90,looseness=0.8] ++(-15pt,-15pt) to[out=-90,in=0,looseness=0.9] (Cup);
      \draw (Cup) to[out=180,in=-90,looseness=0.9] ++(-15pt,15pt) to[out=90,in=0,looseness=0.8] (Top);
      \draw (Top) to[out=180,in=90,looseness=0.8] ++(-15pt,-15pt) to (End);

      \node [on grid, above right=25pt and 20pt of
      Bottom,font=\fontsize{15}{24}\selectfont] (M1)
      {\(\Rightarrow\)};
      \node [above=0pt of M1] {\(\gamma_f\)};

      \node [on grid, below right=25pt and 20pt of M1](Base5) {\(f\)};
      \node [on grid, right=30pt of Base5] (Base6) {\(\rinv f\)};
      \node [on grid, above right=50pt and 15pt of Base5,draw] (Top3) {\(f_R\)};
      \draw (Base5) to ++(0,35pt) to[out=90,in=180,looseness=0.8] (Top3);
      \draw (Base6) to ++(0cm,35pt) [out=90,in=0,looseness=0.8] to (Top3);
    \end{tikzpicture}
  \end{center}

  Which is bi-invertible by composition of bi-invertible structures
  \(\gamma_f\) and the bi-invertible structure on identities.
\end{proof}

This theorem effectively proves the first ``layer'' of
contractibility, that there is a cell between each pair of
\(0\)-cells. Whereas this layer used the adjoint coherences in its
proof, the next layer, of which the proof is omitted as it requires 3
dimensional reasoning that is not rigorous with the tools we have
here, can be constructed with swallowtail
equivalences~\cite{nlab:lax_2-adjunction}. These can be thought of as
the next coherence up. It is expected that the layer after this could
be nicely proved with the coherence condition after this one. We
conjecture that given a suitable notion of higher category, it should
be possible to show this structure is fully contractible.

One might ask why a similar proof will not work for invertible cells.
The reason for this is that the cells considered above are not the
canonical type of cell one would use to compare invertible cells. The
difference is that in the case of invertible cells, \(\phi_L\) and
\(\phi_R\) can be forced to be equal. In the above proof this is not
the case. In fact, given that invertibility is not a contractible type
in the type theory setting, we should expect that there is no such
proof.

It is expected that adding the structure of the half adjoint inverse
adds the exactly the kind of coherence we need to get the proof to
work. In the half-adjoint case we also have that \(\phi_L\) and
\(\phi_R\) must be equal. However a coherence for \(\gamma\) would also be necessary.
This higher dimensional cell makes it hard to work with the tools we
have, and so a proof is not attempted.

Another contractibility question that can be asked is the following:
Let \(\mathcal{C}\) be the infinity category freely generated by two
\(0\)-cells, a single \(1\)-cell between them and an invertibility
structure on this morphism. Is \(\mathcal{C}\) contractible?

We know that this is certainly not true for quasi-invertible
structures. Suppose we start with \(0\)-cells \(x\) and \(y\) and a
morphism \(f : x \to y\) and quasi-invertibility structure \((\inv f,
f_R, f_L, f_R^{QI}, f_L^{QI})\). Then the contractibility condition
effectively tells us that the cell required for a half-adjoint
invertibility structure should exist. However, this cell need not exist.
If we consider \(\mathbf{Cat}\) as a \(\omega\)-category (by
considering the 2-category and letting each higher globular set be either
empty or terminal), then we get that a quasi-invertible structure is
an equivalence and a half-adjoint invertible structure is an adjoint
equivalence. It is well known that not all equivalences are adjoint
equivalences.

However this problem does not arise in the bi-invertible case, and the
existence of the cell is enforced in the half-adjoint invertible case.
This gives good reason to believe that the infinity category freely
generated from these structures may be contractible though we have no
progress towards a proof of this. It is possible that similar
coinductive procedures can be employed to prove this.

\printbibliography{}
\end{document}